\theoremstyle{plain} %text of this environment is typesetted in italics
\newtheorem{theorem}{\indent\bf Theorem}[section]
\theoremstyle{definition} %text of this environment is typesetted in roman letters
\newtheorem{definition}[theorem]{\indent\bf Definition}
\newtheorem{remark}[theorem]{\indent\bf Remark}
\newcommand{\dbar}{\overline{\partial}}
\newcommand{\ai}{\sqrt{-1}}
\newcommand{\R}{\mathbb{R}}
\newcommand{\C}{\mathbb{C}}
\newcommand{\N}{\mathbb{N}}
\newcommand{\dl}{\mathrm{d}\lambda}
\newcommand{\Rea}{\mathrm{Re}}
\newcommand{\Ima}{\mathrm{Im}}
\begin{document}
\pagestyle{plain}
\thispagestyle{plain}

\title[]
{Optimal $L^2$-extensions on tube domains and a simple proof of Pr\'ekopa's theorem}

\author[T. INAYAMA]{Takahiro INAYAMA}
\address{Department of Mathematics\\
Faculty of Science and Technology\\
Tokyo University of Science\\
2641 Yamazaki, Noda\\
Chiba, 278-8510\\
Japan
}
\email{inayama\_takahiro@ma.noda.tus.ac.jp}
\email{inayama570@gmail.com}
\subjclass[2020]{32U05, 52A39}
\keywords{ %key words and phrases
Pr\'ekopa's theorem, $L^2$-extension, convexity, minimal extension property.
}
%\date{\today}

%%%%%%%%%%%%%%%%%%%%%%%%%%%%%%%%%%%%%%%%%%%%%%%%%%%%%%
\begin{abstract}
We prove the optimal $L^2$-extension theorem of Ohsawa-Takegoshi type on a tube domain. 
As an application, we give a simple proof of Pr\'ekopa's theorem. 
\end{abstract}

%%%%%%%%%%%%%%%%%%%%%%%%%%%%%%%%%%%%%%%%%%%%%%%%%%%%%%

\maketitle
\setcounter{tocdepth}{2}
%\tableofcontents
%%%%%%%%%%%%%%%%%%%%%%%%%%%%%%%%%%%%%%%%%%%%%%%%%%%%%%

\section{Introduction}
Pr\'ekopa's theorem \cite{Pre73}, which can be seen as a generalization of the Brunn-Minkowski theorem, plays an important role in convex geometry. 
The theorem asserts that if $\varphi: \R_{t}\times \R^{n}_{x}\to \R $ is a convex function, the function $\Phi:\R\to \R$ defined by 
$$
e^{-\Phi(t)}:= \int_{\R^n}e^{-\varphi(t,x)} \dl(x)
$$
is also convex. 

Replacing $\R$ by $\C$ and convex functions by plurisubharmonic functions, we can consider a version of Pr\'ekopa's theorem in the complex setting. 
Unfortunately, it is known that this complex Pr\'ekopa problem does not hold in general (see \cite{Kis78}). 
However, Berndtsson \cite[Theorem 1.3, 2]{Ber98} proved that if a plurisubharmonic function $\varphi: D\times (V+\ai \R^n)\subset \C_\tau\times \C_z^n\to \R\cup \{ -\infty\}$ is independent of $\Ima(z)$, 
the function $\Phi$ on $D$ defined by 
$$
e^{-\Phi(\tau)}:= \int_{V}e^{-\varphi(\tau, \Rea (z))}\dl (\Rea (z))
$$
is plurisubharmonic as well, where $V\subset \R^n$ is a convex domain and $V+\ai \R^n:=\{ z=x+\ai y\in \C^n \mid x\in V\}$ is a tube domain. 
The above assumption of $\varphi$ is appropriate in the following sense. 
If $\varphi$ is a convex function on $V$, the associated function $\widehat{\varphi}(z):= \varphi(x)$ is plurisubharmonic on $V+\ai \R^n$. 
Conversely, if $\widehat{\varphi}$ is  plurisubharmonic on $V+\ai \R^n$ and independent of $\Ima (z)$, the well-defined function $\varphi(x):=\widehat{\varphi}(x+\ai \R^n)$ is convex on $V$. 
This simple observation allows us to study the convexity of functions via complex analytic methods. 
For the Pr\'ekopa theorem and the complex Pr\'ekopa theorem, one main tool to prove them is the $L^2$-\textit{estimate} of $d$ or $\dbar$ equation (see e.g. \cite{BL76}, \cite{Ber98}). 

In this article, we give a proof of Pr\'ekopa's theorem by using $L^2$-\textit{extension} theorems without any regularity assumption or direct computation of curvature. 
In order to give the proof, we prove the following optimal $L^2$-extension theorem. 

\begin{theorem}\label{thm:optimaltube}
	Let $D\subset \C_\tau$ be a domain, $V$ be a bounded convex domain in $\R^n_{x}$ and $V_{x}+\ai \R^n_{y} \subset \C^n_{z}$ be a tube domain. 
	Assume that $\varphi(\tau, z)$ is a plurisubharmonic function on $D\times (V+\ai \R^n)$, which is independent of $y=\Ima(z)$. 
	Then, for any point $a\in D$ and any $r>0$ such that $\int_V e^{-\varphi(a, x)}\dl(x)<+\infty$ and $\Delta(a;r)=\{ |\tau-a|<r\}\subset D$, there exists a holomorphic function $f$ on $\Delta(a;r)$ satisfying $f(a)=1$ and 
	$$
	\int_{\Delta(a;r)\times V}|f(\tau)|^2e^{-\varphi(\tau,x)}\dl(\tau,x)\leq \pi r^2\int_V e^{-\varphi(a,x)}\dl (x). 
	$$
\end{theorem}

This is a version of the optimal $L^2$-extension theorem due to \cite{Blo13},\cite{GZ15}, initially proved by Ohsawa and Takegoshi \cite{OT87} for some constant, not necessarily optimal. 
%The above constant $\pi r^2$ is known to be the optimal by \cite{Blo13}, \cite{GZ15}. 

The proof of Theorem \ref{thm:optimaltube} is a little bit \textit{complex}.
On the other hand, if we regard the optimal $L^2$-extension theorem above as a fact, 
we can give a quite simple proof of Pr\'ekopa's theorem. 
A key notion is \textit{the minimal extension property} or \textit{the optimal $L^2$-extension property}, which is introduced in \cite{HPS18} or \cite{DNW19}, \cite{DNWZ20}, respectively.
 
 \vskip10mm
 {\bf Acknowledgment. }
 The author would like to thank Bo Berndtsson for reading and commenting on a draft version.
 He is also grateful to the anonymous referee for careful reading and pointing out a gap in the proof of the main theorem. 
 %He is supported by JSPS KAKENHI Grant Number 18J22119. 

\section{Optimal $L^2$-extensions and minimal extension property}

In this article, we let $\lambda_n$ denote the standard Lebesgue measure on $\R^n$ and omit $n$.
%In this section, we introduce \textit{the optimal $L^2$-extension theorem} and the notion of {\it the minimal extension property} and {\it the optimal $L^2$-extension property}.
First, we introduce the optimal $L^2$-extension theorem in the following form. 

\begin{theorem}[\cite{Blo13}, \cite{GZ15}]\label{thm:blockiguanzhou}
	Let $D$ be a bounded pseudoconvex domain with $D\subset \C^{n-1}\times \{ |z_n|<r\}$ for $r>0$. 
	We also let $\varphi$ be a plurisubharmonic function on $D$ and $H:=\Omega \cap \{z_n=0 \}$. 
	Then for any holomorphic function $f$ on $H$ with $\int_H |f(z')|^2e^{-\varphi(z',0)}\dl(z')<+\infty$, there exists a holomorphic function $F$ on $D$ satisfying $F|_H=f$ and 
	$$
	\frac{1}{\pi r^2}\int_{D}|F(z',z_n)|^2e^{-\varphi(z', z_n)}\dl(z',z_n)\leq \int_H |f(z')|^2e^{-\varphi(z',0)}\dl(z'),
	$$
	where $(z')=(z_1, \cdots, z_{n-1})\in \C^{n-1}$.
\end{theorem}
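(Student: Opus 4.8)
The plan is to establish the sharp estimate through Hörmander's $\dbar$-method in its twisted form, using the ``undetermined functions'' device that is responsible for the optimal constant. First I would pass to a convenient smooth setting: exhaust $D$ from inside by smoothly bounded, strictly pseudoconvex subdomains, and replace $\varphi$ by a decreasing sequence of smooth strictly plurisubharmonic weights converging to $\varphi$. It then suffices to produce, for each approximation, a holomorphic extension obeying the stated inequality with a constant that does not deteriorate; a normal-families argument (Montel together with the uniform $L^2$ bound and Fatou's lemma) extracts a holomorphic limit $F$ on $D$ with $F|_H=f$ and the desired bound. I would only need to check that weak $L^2$ limits respect the interpolation condition on $H$ and that the constant $\pi r^2$ survives the limit, both of which are routine.

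The analytic core is to solve a $\dbar$-equation with a gain so as to kill the obstruction to holomorphicity while vanishing on $H$. I would fix a cutoff $\chi$ equal to $1$ near the origin and set $\tilde f(z',z_n):=f(z')\chi(|z_n|^2/\varepsilon)$, a smooth but non-holomorphic extension for which $\beta:=\dbar\tilde f$ is supported in a thin annulus $\{c\varepsilon\le|z_n|^2\le\varepsilon\}$ disjoint from $H$. I then seek $u$ with $\dbar u=\beta$ and $u|_H=0$, so that $F:=\tilde f-u$ is holomorphic and satisfies $F|_H=f$. To control $u$ with the optimal constant I would use a twisted $L^2$ a priori estimate of Donnelly--Fefferman--Berndtsson type, in which the weight $e^{-\varphi}$ is modified by a factor and multiplied by a positive twisting function, both taken to be functions of the single variable $t:=-\log|z_n|^2$; since $\ai\ddbar\varphi\ge 0$, the full curvature term is nonnegative and only the $t$-dependent factors enter the constant. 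Choosing these auxiliary functions to solve the associated first-order ODE (B\l ocki's equation) makes the Cauchy--Schwarz and curvature inequalities asymptotically sharp, while the $|z_n|^{-2}$-type singularity built into the weight near $H$ forces any finite-norm holomorphic $u$ to vanish on $H$; carrying out the angular and radial integration in $z_n$ over the annulus then produces exactly the factor $\pi r^2$ as $\varepsilon\to0$.

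The main obstacle, and indeed the whole content of the theorem, is the \emph{sharpness} of the constant: the non-optimal Ohsawa--Takegoshi bound follows from any crude choice of the auxiliary functions, whereas the value $\pi r^2$ appears only when the ODE is solved exactly and the various parameters are sent to their limits in the correct order. I expect the delicate point to be the coupled limit in which the annulus width $\varepsilon$, the regularization index of $\varphi$, and the strength of the singularity along $H$ are balanced so that every inequality used---the Cauchy--Schwarz pairing against $\beta$, the curvature lower bound, and the discarded error terms coming from $\dbar\chi$---becomes an equality in the limit. I would handle this by freezing $z'$ and computing the one-variable integrals in $z_n$ explicitly, verifying that the annulus contribution converges to $\pi r^2\int_H|f|^2e^{-\varphi}$ and that no mass is lost, which is exactly the computation that pins down the optimal constant.
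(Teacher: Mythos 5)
The paper does not prove this statement at all: Theorem \ref{thm:blockiguanzhou} is imported as a known result from \cite{Blo13} and \cite{GZ15}, so there is no internal proof to compare against. Your outline faithfully reproduces the route of those cited works --- the Donnelly--Fefferman/Berndtsson twisted $\dbar$-estimate with auxiliary functions of $-\log|z_n|^2$ chosen to satisfy B\l ocki's ODE, the singular weight forcing the correction term to vanish on $H$, and regularization plus a normal-families limit to remove smoothness assumptions --- and as a plan it is sound; note only that it remains an outline, since the twist functions and the ODE solution are named rather than exhibited, and that explicit choice is precisely where the optimal constant $\pi r^2$ is earned.
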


Then we introduce the notion of {\it the minimal extension property} and {\it the optimal $L^2$-extension property}
 (hereafter, we will use the former term). 

\begin{definition}[minimal extension property \cite{HPS18}, the optimal $L^2$-extension property \cite{DNW19}, \cite{DNWZ20}]
	Let $\varphi:D\to \R\cup \{ -\infty\}$ be an upper semi-continuous function on a domain $D\subset \C$. We say that $\varphi$ satisfies {\it minimal extension property} if for any $a\in D$ with $\varphi(a)\neq -\infty$ and for any $r>0$ satisfying $\Delta(a; r)\subset D$,  there exists a holomorphic function on $\Delta(a;r)$ such that $f(a)=1$ and 
	$$
	\frac{1}{\pi r^2}\int_{\Delta(a;r)}|f|^2 e^{-\varphi}\dl \leq e^{-\varphi(a)}. 
	$$
\end{definition}

Note that the minimal extension property can be defined for an $n$-dimensional domain. 
In this paper, we only consider the case $n=1$. 
If $\varphi$ is plurisubharmonic, due to Theorem \ref{thm:blockiguanzhou}, 
$\varphi$ satisfies the above minimal extension property.
As a converse, it is known that the following result holds. 

\begin{theorem}$($\cite[Theorem 1.4]{DNW19}, cf. \cite{GZ15}, \cite{HPS18},  \cite{DNWZ20}$)$.\label{thm:mep} 
	Keep the notation above. If an upper semi-continuous function $\varphi$ satisfies the minimal extension property, $\varphi$ is plurisubharmonic. 
\end{theorem}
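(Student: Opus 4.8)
The plan is to prove the statement for $n=1$, where plurisubharmonicity coincides with subharmonicity, by upgrading the minimal extension property into the sub-mean value inequality for $\varphi$. Throughout fix a point $a\in D$ with $\varphi(a)\neq-\infty$ and a radius $r$ with $\Delta(a;r)\subset D$, and let $f=f_r$ be the holomorphic function on $\Delta(a;r)$ furnished by the minimal extension property, so that $f(a)=1$ and $\frac{1}{\pi r^2}\int_{\Delta(a;r)}|f|^2e^{-\varphi}\dl\le e^{-\varphi(a)}$. The goal is to deduce that $\varphi$ satisfies the sub-mean value property at $a$, which together with local integrability (also to be extracted below) is equivalent to $\Delta\varphi\ge 0$ in the sense of distributions, and hence to subharmonicity.

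First I would produce a \emph{weighted} sub-mean value inequality by a convexity argument. Since $|f|^2$ is subharmonic and $|f(a)|^2=1$, the ordinary sub-mean value inequality gives $\int_{\Delta(a;r)}|f|^2\dl\ge \pi r^2$. Writing $\psi:=\varphi-\varphi(a)$ and introducing the probability measure $\dl\mu:=|f|^2\dl/\int_{\Delta(a;r)}|f|^2\dl$, the extension estimate rearranges to $\int e^{-\psi}\dl\mu\le \pi r^2/\int_{\Delta(a;r)}|f|^2\dl\le 1$. Applying Jensen's inequality to the convex function $t\mapsto e^{-t}$ then yields $\int\psi\,\dl\mu\ge 0$, that is
\[
\varphi(a)\le \frac{\int_{\Delta(a;r)}\varphi\,|f|^2\dl}{\int_{\Delta(a;r)}|f|^2\dl},
\]
a sub-mean value inequality carrying the spurious weight $|f|^2$, valid for every admissible $r$.

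The central difficulty is to remove this weight, and I would do so by a rescaling and normal-family argument that exploits the optimality of the constant. Setting $g_r(\zeta):=f_r(a+r\zeta)$ on $\Delta(0;1)$ and $\psi_r(\zeta):=\varphi(a+r\zeta)-\varphi(a)$, upper semicontinuity makes $\varphi$ locally bounded above, so $e^{-\psi_r}\ge c>0$ uniformly; the rescaled estimate $\frac{1}{\pi}\int_{\Delta(0;1)}|g_r|^2e^{-\psi_r}\dl\le 1$ then bounds $\|g_r\|_{L^2(\Delta(0;1))}$ uniformly, so $\{g_r\}$ is a normal family. For any locally uniform limit $g$ one has $g(0)=1$, and combining Fatou's lemma with the pointwise bound $\liminf_{r\to0}e^{-\psi_r}\ge 1$ (from upper semicontinuity) forces $\frac{1}{\pi}\int_{\Delta(0;1)}|g|^2\dl\le 1$; since $|g(0)|^2=1$, the sub-mean value inequality for $|g|^2$ forces the reverse inequality, whence $g\equiv 1$. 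Thus $g_r\to 1$ and the weight becomes asymptotically trivial; removing it from the displayed inequality should upgrade it, for all sufficiently small $r$, to the genuine sub-mean value inequality $\varphi(a)\le\frac{1}{\pi r^2}\int_{\Delta(a;r)}\varphi\,\dl$.

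Finally I would convert this into subharmonicity. Local integrability of $\varphi$ near any point where it is finite follows from the weighted inequality, since $|f|^2$ vanishes only at isolated points; this makes $\Delta\varphi$ a well-defined distribution. Testing the sub-mean value inequality against a nonnegative $\chi\in C_c^\infty$ and using the expansion of the disk-average of $\chi$, namely $\frac{1}{\pi r^2}\int_{\Delta(\,\cdot\,;r)}\chi\,\dl=\chi+\frac{r^2}{8}\Delta\chi+o(r^2)$, yields $\langle\Delta\varphi,\chi\rangle\ge 0$, so $\varphi$ is subharmonic. The hard part throughout is the weight-removal step: interchanging $\lim_{r\to0}$ with the integral in $\int(\varphi-\varphi(a))|g_r|^2\dl$ is delicate because $\varphi$ may tend to $-\infty$ while $|g_r|^2$ is not uniformly bounded near $\partial\Delta(0;1)$, and obtaining the honest inequality for all small $r$ (rather than merely a subsequential or infinitesimal one) is exactly what must be secured. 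I expect to control the singular region where $\psi_r\ll 0$ using the rescaled extension bound $\frac1\pi\int_{\Delta(0;1)}|g_r|^2e^{-\psi_r}\dl\le 1$, which forces $\int_{\{\psi_r<-t\}}|g_r|^2\dl\le\pi e^{-t}$, together with a Lebesgue-point and uniform-integrability argument, and to treat the interior region, where $g_r\to 1$ uniformly, directly.
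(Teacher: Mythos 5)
Your opening step is correct and close in spirit to the paper, but you apply Jensen's inequality with respect to the \emph{weighted} probability measure $|f|^2\dl/\int|f|^2\dl$, which creates the weight-removal problem that your whole second half then fails to solve. The gap you flag yourself is genuine and, as sketched, not closable: the normal-family argument gives $g_r\to 1$ only locally uniformly on the open unit disk and with \emph{no rate}, while the error in passing from the weighted to the unweighted average, namely $\int_{\Delta(0;1)}\psi_r\bigl(1-|g_r|^2\bigr)\dl$, is controlled by nothing you have. Your bound $\int_{\{\psi_r<-t\}}|g_r|^2\dl\le \pi e^{-t}$ controls only the $|g_r|^2$-weighted mass of the singular set and says nothing about $\int_{\{\psi_r<-t\}}|\psi_r|\dl$ near zeros of $g_r$ or near $\partial\Delta(0;1)$, where $|g_r|$ has no pointwise lower bound. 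In fact you never establish that $\int_{\Delta(a;r)}\varphi\,\dl>-\infty$: integrability of $\varphi$ against $|f|^2\dl$ plus the fact that zeros of $f$ are isolated does \emph{not} give local Lebesgue integrability (if $f$ has a double zero at $b$ and $\varphi\sim -|z-b|^{-2}$ there, then $\int|\varphi||f|^2\dl<\infty$ while $\int|\varphi|\dl=\infty$), so your claimed route to $\Delta\varphi\ge 0$ in the distributional sense breaks already at the step of making $\Delta\varphi$ a distribution. Finally, even granting the compactness step, it yields at best an approximate mean-value inequality with errors of size $M_r^+=\sup_{\Delta(a;r)}(\varphi-\varphi(a))^+$ and $\epsilon_r=\|\,|g_r|^2-1\|_{\infty,\mathrm{loc}}$, neither of which is $o(r^2)$; an $o(1)$ error is insufficient to conclude subharmonicity (the exact sub-mean value inequality, or an $o(r^2)$ error, is what the distributional test requires).

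The missing idea, which is the paper's entire proof, is to apply Jensen with respect to the \emph{normalized Lebesgue measure} $\dl/(\pi r^2)$ to the logarithm of the full integrand: from $\frac{1}{\pi r^2}\int_{\Delta(a;r)}|f|^2e^{-\varphi}\dl\le e^{-\varphi(a)}$ one gets
\begin{equation*}
-\varphi(a)\ \ge\ \log\Bigl(\int_{\Delta(a;r)}|f|^2e^{-\varphi}\,\frac{\dl}{\pi r^2}\Bigr)\ \ge\ \frac{1}{\pi r^2}\int_{\Delta(a;r)}\log|f|^2\,\dl\ -\ \frac{1}{\pi r^2}\int_{\Delta(a;r)}\varphi\,\dl,
\end{equation*}
and now the weight removes \emph{itself}: since $\log|f|^2$ (not $|f|^2$) is subharmonic and $\log|f(a)|^2=0$, its disk average is $\ge 0$, giving directly $\varphi(a)\le \frac{1}{\pi r^2}\int_{\Delta(a;r)}\varphi\,\dl$ for every admissible $r$ --- including, as a by-product, the finiteness and local integrability you were missing. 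Your Jensen step with the convex function $e^{-t}$ and the measure $|f|^2\dl$ discards exactly the cancellation that makes this work; switching to log-Jensen against normalized area measure replaces your entire rescaling, normal-family, and uniform-integrability program with three lines.
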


This type of idea was initially observed by Guan and Zhou in \cite{GZ15}. 
For the sake of completeness, we give the proof. 

\begin{proof}
	It is enough to show that $\varphi$ satisfies the mean value inequality at any point $a\in D$ with $\varphi(a)>-\infty$. 
	Take any $r>0$ satisfying $\Delta(a; r)\subset D$. 
	Thanks to the assumption, we can take a holomorphic function $f$ on $\Delta(a; r)$ satisfying $f(a)=1$ and 
	$$
	\frac{1}{\pi r^2}\int_{\Delta(a; r)}|f|^2e^{-\varphi} \dl \leq e^{-\varphi(a)}. 
	$$
	Taking logarithms and using Jensen's inequality, we have 
	\begin{align*}
	-\varphi(a) &\geq \log \left( \int_{\Delta(a; r)} |f|^2e^{-\varphi}\frac{\dl}{\pi r^2} \right)\\
	&\geq \frac{1}{\pi r^2}\int_{\Delta(a; r)}\log |f|^2\dl - \frac{1}{\pi r^2}\int_{\Delta(a; r)} \varphi \dl.
	\end{align*}
	Since $\log |f|^2$ is plurisubharmonic and $f(a)=1$, we obtain 
	$$
	\frac{1}{\pi r^2}\int_{\Delta(a; r)} \varphi \dl \geq \varphi(a). 
	$$
\end{proof}

\section{Optimal $L^2$-extension theorems on tube domains}\label{sec:optimall2}

In this section, we prove Theorem \ref{thm:optimaltube}. 
Ohsawa-Takegoshi type $L^2$-extension theorems usually require the boundedness of domains. 
To extend holomorphic functions on unbounded domains such as tube domains, we take a functional analytic approach. 
The proof is inspired by the method in \cite{Ber98}. 
%Here we present the proof in as much detail as possible. 
Throughout the proof, we simply write $y$ instead of some $y_i$ (for example, $\frac{\partial }{\partial y}$). 
We also say that a function $f$ is holomorphic on a non-open set $K$ if $f$ is holomorphic on some open neighborhood $U$ of $K\subset U$. 

\begin{proof}[\indent Proof of Theorem \ref{thm:optimaltube}.]
	The proof is divided into three steps. 
	
	(Step 1) Construct holomorphic functions on each bounded domain.
	
	Let $B_R\subset \R^n$ denote $B_R:=\{ y=(y_1,\cdots, y_n)\in \R^n \mid |y|^2=|y_1|^2+\cdots +|y_n|^2<R^2\}$ for $R>0$. 
	Consider a constant function $1$ on $\{a\}\times (V+\ai B_R)$. 
	Then, due to Theorem \ref{thm:blockiguanzhou}, we get a holomorphic function $f_R$ on $\Delta(a;r)\times (V+\ai B_R)$ satisfying $f_R|_{\{a\}\times (V+\ai B_R)}\equiv 1$ and 
	\begin{align}
	\int_{\Delta(a;r)\times (V+\ai B_R)}|f_R|^2e^{-\varphi(\tau, x, y)}\dl (\tau, x, y) &\leq \pi r^2 \int_{(V+\ai B_R)} e^{-\varphi(a, x, y)}\dl (x, y)\\
	&\leq \pi r^2 (\sigma_n R^n)\int_V e^{-\varphi(a, x)}\dl (x)\label{eq:roptimal}
	\end{align}
	for each $R>0$ since $\varphi$ is independent of $y$. Here $\sigma_n$ is the volume of the unit ball in $\R^n$. 
	Roughly speaking, we would like to consider the limit $\lim_{R\to +\infty}f_R/ \sqrt{\sigma_nR^n}$. To do this procedure precisely, we take a convolution of $f_R$ with bump functions. 
	
	(Step 2) Take a convolution and estimate $L^2$ norms. 
	
	Define a bump function $\chi_R(y)$ on $\R^n_y$ as follows: $0\leq \chi_R\leq 1$, $\chi_R$ is smooth and has compact support in $\{ |y|<R-\sqrt{R} \}$, $\chi_R|_{\{ |y|<R-2\sqrt{R}\}}\equiv 1$ and $|\nabla \chi_R|\leq C/\sqrt{R}$ for some positive constant $C>0$. 
	We also let $a_R:=\int_{\R^n}\chi_R(w)\dl(w)$. 
	Take a convolution of $f_R$ with $\chi_R/a_R$
	$$
	\widetilde{f}_R(\tau,x,y):= \frac{1}{a_R} \int_{\R^n}f_R(\tau,x,y-w)\chi_R(w)\dl(w)=\frac{1}{a_R} \int_{\R^n}\chi_R(y-w)f_R(\tau, x, w)\dl(w).
	$$
	Here we regard $f_R\equiv 0$ on $\Delta(a;r)\times (V+\ai (\R^n\setminus \overline{B_R}))$ and take the convolution on $\R^n$. 

	Note that $f_R(\tau,x,w)\equiv 0$ and $\chi_R(w)\equiv 0$ if $w\in \R^n\setminus \overline{B_R}$. 
	Then we have that 
	%$$
	%\widetilde{f}_R(a,x,y)=\frac{1}{a_R}\int_{B_R}1\cdot \chi_R(w)\dl(w)=1.
	%$$
	%We also obtain 
	\begin{align}
	|\widetilde{f}_R(\tau,x,y)|^2 &= \frac{1}{a_R^2}\left| \int_{B_R} \chi_R(y-w)f_R(\tau, x, w)\dl(w)\right|^2\\
	&\leq \frac{1}{a_R^2}\int_{B_R} |\chi_R(y-w)|^2\dl(w)\int_{B_R}|f_R(\tau, x, w)|^2\dl(w)\\
	&\leq \frac{\sigma_nR^n}{a_R^2}\int_{B_R}|f_R(\tau, x, w)|^2\dl(w)\label{eq:convolution}
	\end{align}
	for $(\tau, x, y)\in \Delta(a;r)\times (V+\ai \R^n)$, and
	\begin{align}
	\int_{\Delta(a;r)\times V}|\widetilde{f}_R(\tau,x,y)|^2e^{-\varphi(\tau, x)}\dl(\tau,x)&\leq \frac{\sigma_nR^n}{a_R^2} \int_{\Delta(a;r)\times (V+\ai B_R)}|f_R(\tau,x,w)|^2e^{-\varphi(\tau, x, w)}\dl(\tau,x,w)\\
	&\leq \frac{(\sigma_nR^n)^2}{a_R^2}\pi r^2 \int_V e^{-\varphi(a,x)}\dl(x)\\
	&\leq \left( \frac{R}{R-2\sqrt{R}}\right)^{2n} \pi r^2 \int_V e^{-\varphi(a,x)}\dl(x)\label{eq:tilda}
	\end{align}
	due to (\ref{eq:roptimal}).
	Here we use the fact that $\sigma_n(R-2\sqrt{R})^n\leq a_R$. 
	Note that $\{ R/(R-2\sqrt{R})\}_{R>\!>0}$ is decreasing and has an upper bound independent of $R$.
	For instance, if $R\geq 100$, we can estimate 
	\begin{equation}\label{eq:gutaitekinateisuu}
	\int_{\Delta(a;r)\times V}|\widetilde{f}_R(\tau,x,y)|^2e^{-\varphi(\tau, x)}\dl(\tau,x)\leq \left( \frac{5}{4}\right)^{2n}\pi r^2 \int_V e^{-\varphi(a,x)}\dl(x).
	\end{equation}
	We also obtain 
	$$
	\frac{\partial \widetilde{f}_R}{\partial y}(\tau,x,y)=\frac{1}{a_R}\int_{\R^n}\frac{\partial \chi_R}{\partial y}(y-w)f_R(\tau,x,w)\dl(w),
	$$
	and 
	\begin{align*}
	\left| \frac{\partial \widetilde{f}_R}{\partial y}(\tau,x,y) \right|^2 &\leq \frac{1}{a_R^2}\int_{B_R}\left| \frac{\partial \chi_R}{\partial y}(y-w)\right|^2\dl(w)\int_{B_R}|f_R(\tau,x,w)|^2\dl(w)\\
	&\leq \frac{\sigma_nR^n}{a_R^2}\left(\frac{C}{\sqrt{R}}\right)^2\int_{B_R}|f_R(\tau,x,w)|^2\dl(w).
	\end{align*}
	Repeating the argument above, we get 
	\begin{align}
	\int_{\Delta(a;r)\times V}\left| \frac{\partial \widetilde{f}_R}{\partial y}(\tau,x,y) \right|^2e^{-\varphi(\tau, x)}\dl(\tau,x)\leq \left( \frac{R}{R-2\sqrt{R}}\right)^{2n}\left(\frac{C}{\sqrt{R}}\right)^2(\pi r^2)\int_V e^{-\varphi(a,x)}\dl(x)\label{eq:bibun}
	\end{align}
	and $R/(R-2\sqrt{R})^{2n}\leq (5/4)^{2n}$ when $R\geq 100$ as well. 
	Since $\chi_R$ has compact support in $\{ |w|<R-\sqrt{R}\}$, for $y\in \{|y|<\sqrt{R}\}$, 
	we may assume that $|y-w|<R$ when we consider the integration
	$$
	\int_{\R^n}f_R(\tau,x,y-w)\chi_R(w)\dl(w)=\int_{B_{R-\sqrt{R}}}f_R(\tau,x,y-w)\chi_R(w)\dl(w).
	$$
	On $\{ |y|<\sqrt{R}/2\}$, we have 
	$$
	\frac{\partial \widetilde{f}_R}{\partial x}(\tau,x,y)=\frac{1}{a_R}\int_{B_{R-\sqrt{R}}}\frac{\partial f_R}{\partial x}(\tau,x,y-w)\chi_R(w)\dl(w), \frac{\partial \widetilde{f}_R}{\partial y}(\tau,x,y)=\frac{1}{a_R}\int_{B_{R-\sqrt{R}}}\frac{\partial f_R}{\partial y}(\tau,x,y-w)\chi_R(w)\dl(w)
	$$
	$$
	\frac{\partial \widetilde{f}_R}{\partial t}(\tau,x,y)=\frac{1}{a_R}\int_{B_{R-\sqrt{R}}}\frac{\partial f_R}{\partial t}(\tau,x,y-w)\chi_R(w)\dl(w), \frac{\partial \widetilde{f}_R}{\partial s}(\tau,x,y)=\frac{1}{a_R}\int_{B_{R-\sqrt{R}}}\frac{\partial f_R}{\partial s}(\tau,x,y-w)\chi_R(w)\dl(w),
	$$
	where $\tau =t+\ai s$. 
	Then we see that $\partial/\partial \bar{\tau}$ and $\partial / \partial \bar{z}$ commute with the integral as well, which implies that $\widetilde{f}_R$ is holomorphic on $\Delta(a;r)\times (V+\ai B_{\sqrt{R}/2})$. 
	
	(Step 3) Take the limit $R\to +\infty$. %Use the Arzel\`a-Ascoli theorem.
	
	%We set $B_n:=\{ y\in \R^n \mid |y|^2<n^2 \}\subset \R^n_y$ for $n\in \N$. 
	We fix a monotonically increasing sequence $\{ R_j\}_{j\in \N}$ of positive numbers such that $R_1$ is sufficiently large, $R_{j+1}>R_j$ and $\lim_j R_j=+\infty$. 
	%Note that for each $n\in \N$, there exists $R$
	We also take an exhaustion by compact sets $\{K_i\}_{i\in \N}$ in $\Delta(a; r)\times (V+\ai \R^n)$ such that $(K_1)^{\mathrm{o}}\neq \emptyset$, $K_i\subset (K_{i+1})^{\mathrm{o}}$ and $\cup_i K_i = \Delta(a; r)\times (V+\ai \R^n)$.
	For each $K_i$, we can obtain compact subsets $L_{i_1}\subset \Delta(a;r)$, $L_{i_2}\subset V$ and $L_{i_3}\subset \R^n$ satisfying $K_i\subset (L_{i_1}\times L_{i_2}\times L_{i_3})^\mathrm{o}$.
	
	First, we consider $L^2$-estimates on $K_1$. 
	It follows that there exists $R_{n_1}$ such that $L_{1_3}\subset \{ |y|<\sqrt{R_{n_1}}/2\}$, that is, for every $j\geq n_1$, $\widetilde{f}_j$ is holomorphic on $L_{1_1}\times L_{1_2}\times L_{1_3}$. 
	Note that thanks to (\ref{eq:tilda}) and (\ref{eq:gutaitekinateisuu}), we have 
	\begin{equation}
	\sup_{y\in L_{i_3}} \| \widetilde{f}_{R_j}(\cdot, \cdot, y)\|_{L^2_\varphi}\leq C<+\infty, \label{eq:upperbound}
	\end{equation}
	where $\| \widetilde{f}_{R_j}(\cdot, \cdot, y)\|_{L^2_\varphi}=\int_{\Delta(a;r)\times V} |\widetilde{f}_{R_j}(\tau, x, y)|^2e^{-\varphi(\tau, x)}\dl (\tau, x)$ and $C$ is a positive constant, which is independent of $R_j$, $K_i$ and $L_{i_1}, L_{i_2}, L_{i_3}$.
	For $i=1$, $\{ \sup_{y\in L_{1_3}} \| \widetilde{f}_{R_j}(\cdot, \cdot, y)\|_{L^2_\varphi}\}_j$ is a bounded sequence. 
	Hence, there exists a convergent subsequence $\{ \sup_{y\in L_{1_3}} \| \widetilde{f}_{R_{j_{1, k}}}(\cdot, \cdot, y)\|_{L^2_\varphi}\}_{j_{1, k}}$. 
	We may assume that $j_{1, 1}\geq n_1$. 
	Since $\varphi$ is locally bounded above, there is a positive constant $C_1$ such that $\varphi \leq C_1$, that is, $e^{-\varphi}\geq e^{-C_1}$ on $L_{1_1}\times L_{1_2}\times L_{1_3}$.
	Then we have that 
	\begin{align*}
	\sup_{y\in L_{1_3}}\| \widetilde{f}_{R_{j_{1, k}}}(\cdot, \cdot, y)-\widetilde{f}_{R_{j_{1, \ell}}}(\cdot, \cdot, y)\|^2_{L^2_\varphi} &\geq \sup_{y\in L_{1_3}}\int_{L_{1_1}\times L_{1_2}}|\widetilde{f}_{R_{j_{1, k}}}(\tau,x,y)-\widetilde{f}_{R_{j_{1, \ell}}}(\tau,x,y)|^2e^{-\varphi(\tau,x)}\dl(\tau,x)\\
	&\geq \frac{1}{|L_{1_3}|}\int_{L_{1_1}\times L_{1_2}\times L_{1_3}}|\widetilde{f}_{R_{j_{1, k}}}(\tau,x,y)-\widetilde{f}_{R_{j_{1, \ell}}}(\tau,x,y)|^2e^{-\varphi(\tau,x)}\dl(\tau,x, y)\\
	&\geq \frac{e^{-C_1}}{|L_{1_3}|}\int_{L_{1_1}\times L_{1_2}\times L_{1_3}}|\widetilde{f}_{R_{j_{1, k}}}(\tau,x,y)-\widetilde{f}_{R_{j_{1, \ell}}}(\tau,x,y)|^2\dl(\tau,x,y)\\
	&\geq C_{K_1,L_{1_1}, L_{1_2}, L_{1_3}} \sup_{(\tau,x,y )\in K_1}|\widetilde{f}_{R_{j_{1, k}}}(\tau,x,y)-\widetilde{f}_{R_{j_{1, \ell}}}(\tau,x,y)|^2
	\end{align*}
	for some positive constant $C_{K_1,L_{1_1}, L_{1_2}, L_{1_3}}>0$ since
	 $\widetilde{f}_{R_{j_{1, k}}}$ and $\widetilde{f}_{R_{j_{1, \ell}}}$ are holomorphic on $L_{1_1}\times L_{1_2}\times L_{1_3}$.
	 Then $\{ \widetilde{f}_{R_{j_{1, k}}} \}_k$ forms a Cauchy sequence in the space of continuous functions on $K_1$ with the sup norm. 
	 Hence, there exists a function $f_{K_1, \infty}$ on $K_1$ such that $\{ \widetilde{f}_{R_{j_{1, k}}} \}_k$ uniformly converges to $f_{K_1, \infty}$ on $K_1$.
	 Here $f_{K_1, \infty}$ is holomorphic in $K_1^\mathrm{o}$.
	 
	 Next, we consider the $L^2$-estimates on $K_2$. Repeating the above argument, we can get a convergent subsequence $\{ \widetilde{f}_{R_{j_{2, k}}} \}_k$ of $\{ \widetilde{f}_{R_{j_{1, k}}} \}_k$ and a function $f_{K_2, \infty}$. 
	 Since $\{ \widetilde{f}_{R_{j_{2, k}}} \}_k$ is also uniformly converging to $f_{K_2, \infty}$, it holds that $f_{K_2, \infty}|_{K_1}=f_{K_1, \infty}$. 
	 
	 By using the diagonal argument, we can finally conclude that there exists a holomorphic function $f_\infty$ on $\Delta(a;r)\times (V+\ai \R^n)$ such that  $\{ \widetilde{f}_{R_{j_{k, k}}} \}_k$ uniformly converges to $f_\infty$ on every compact set. 
	Then $\frac{\partial \widetilde{f}_{R_{j_{k, k}}}}{\partial y}$ also uniformly converges to $\frac{\partial f_\infty}{\partial y}$ on every compact set. 
	Fix any point $(\tau_0, x_0, y_0)\in \Delta(a;r)\times (V+\ai \R^n)$ and take $K_{n} \ni (\tau_0, x_0, y_0)$. 
	By (\ref{eq:bibun}), we have that 
	\begin{align*}
	e^{-C'}\int_{L_{{n}_1}\times L_{{n}_2}}\left| \frac{\partial \widetilde{f}_{R_{j_{k, k}}}}{\partial y}(\tau,x,y_0)\right|^2\dl(\tau,x)&\leq \int_{L_{{n}_1}\times L_{{n}_2}}\left| \frac{\partial \widetilde{f}_{R_{j_{k, k}}}}{\partial y}(\tau,x,y_0)\right|^2e^{-\varphi(\tau,x)}\dl(\tau,x)\\
	&\leq \left( \frac{R_{j_{k_0, k_0}}}{R_{j_{k_0, k_0}}-2\sqrt{R_{j_{k_0, k_0}}}}\right)^{2n}\left( \frac{C}{\sqrt{R_{j_{k_0, k_0}}}}\right)^2(\pi r^2)\int_V e^{-\varphi(a,x)}\dl(x)<+\infty
	\end{align*}
	for $k\geq k_0$. 
	Then we obtain 
	$$
	\int_{L_{{n}_1}\times L_{{n}_2}}\left| \frac{\partial {f}_\infty}{\partial y}(\tau,x,y_0)\right|^2\dl(\tau,x)\leq \frac{C''}{R_{j_{k_0, k_0}}}\left( \frac{R_{j_{k_0, k_0}}}{R_{j_{k_0, k_0}}-2\sqrt{R_{j_{k_0, k_0}}}}\right)^{2n}\int_V e^{-\varphi(a,x)}\dl(x)<+\infty
	$$
	for $C''>0$. Letting $k_0\to \infty$, we get 
	%$\frac{\partial f_\infty}{\partial y}(\tau,x,y)=0$ on an arbitrary compact set $K$, which implies that $f_\infty$ is a holomorphic function independent of $y$. 
	$\frac{\partial f_\infty}{\partial y}(\tau,x,y_0)=0$ on $L_{{n}_1}\times L_{{n}_2}$, that is, $\frac{\partial f_\infty}{\partial y}(\tau_0,x_0,y_0)=0$. 
	Since $(\tau_0,x_0,y_0)$ is arbitrary, $f_\infty$ is a holomorphic function independent of $y$. 
	Hence, $f_\infty$ is independent of $z=x+\ai y$. 
	Then we define the well-defined holomorphic map $f:\Delta(a;r)\to \C$ by $f(\tau):=f_\infty(\tau,x,y)$.
	%Fix $j_0\in \N$. 
	For $j\in \N$, by (\ref{eq:tilda}), we get 
	\begin{align*}
	\int_{\Delta(a;r)\times V}|\widetilde{f}_{R_{j_{k, k}}}(\tau,x,y)|^2e^{-\varphi(\tau,x)}\dl(\tau,x)&\leq \left( \frac{R_{j_{k, k}}}{R_{j_{k, k}}-2\sqrt{R_{j_{k, k}}}}\right)^{2n}\pi r^2 \int_V e^{-\varphi(a,x)}\dl(x).
	%&\leq \left( \frac{R_{j_0}}{R_{j_0}-2\sqrt{R_{j_0}}}\right)^{2n}\pi r^2 \int_V e^{-\varphi(a,x)}\dl(x).
	\end{align*}
	%Since $\| F_{R_j}(y)-F_\infty(y)\|^2_{L^2_\varphi}\to 0$ as $j\to \infty$ for $y\in \R^n$, 
	Taking the limit $k \to \infty$, thanks to Fatou's lemma, 
	we have that 
	$$
	\int_{\Delta(a;r)\times V}|{f}_{\infty}(\tau,x,y)|^2e^{-\varphi(\tau,x)}\dl(\tau,x)\leq \pi r^2 \int_V e^{-\varphi(a,x)}\dl(x),
	$$
	%Taking the limit $j_0 \to \infty$, we get 
	that is,
	$$
	\int_{\Delta(a;r)\times V}|f(\tau)|^2e^{-\varphi(\tau,x)}\dl(\tau,x)\leq \pi r^2 \int_V e^{-\varphi(a,x)}\dl(x).
	$$
	
	We also have that 
	\begin{align*}
	\widetilde{f}_{R_{j_{k, k}}}(a,x,0)&=\frac{1}{a_{R_{j_{k, k}}}}\int_{\R^n}f_{R_{j_{k, k}}}(a,x,-w)\cdot \chi_{R_{j_{k, k}}}(w)\dl(w)\\
	&=\frac{1}{a_{R_{j_{k, k}}}}\int_{B_{R_{j_{k, k}}}}1\cdot \chi_{R_{j_{k, k}}}(w)\dl(w)\\
	&=1.
	\end{align*}
	%on $\{ |y|<\sqrt{R_j} \}$.
	Then we see that $f(a)=f_\infty(a,x,0)=\lim_{k\to \infty}\widetilde{f}_{R_{j_{k, k}}}(a,x,0)=1$, which completes the proof. 
\end{proof}

\begin{remark}
	The constant in the $L^2$-extension of $\widetilde{f}_R$ is not optimal and changes for each $R>0$ (see (\ref{eq:tilda})). However, by taking the limit ${R\to \infty}$, we can estimate the $L^2$-norm of $f_\infty$ with the optimal constant. 
\end{remark}

\section{A simple proof of Pr\'ekopa's theorem}\label{sec:simpleprekopa}

In this section, applying Theorem \ref{thm:optimaltube}, we give a simple proof of Pr\'ekopa's theorem. 
The proof based on a non-optimal $L^2$-extension theorem for the ``complex" version of Pr\'ekopa's theorem also appeared in \cite{DWZZ18}, \cite{DWZZ19}.
Our main purposes are to establish the optimal $L^2$-extension theorem on tube domains and to give a proof of Pr\'ekopa's theorem in the ``real" setting directly.
%By establishing the optimal $L^2$-extension theorem with tube domains above, we can give a proof of Pr\'ekopa's theorem in the ``real" setting directly, which is a main purpose of this article. 

First, we consider the following case. 

\begin{theorem}
	Let $V$ be a convex domain in $\R^n$ and $\varphi$ be a convex function on $\R_t\times V_x$. Assume that $V$ is bounded and 
	$$
	e^{-\Phi(t)}:=\int_{V}e^{-\varphi(t,x)}\dl(x)<+\infty
	$$
	for each $t\in R$. Then $\Phi$ is convex.
\end{theorem}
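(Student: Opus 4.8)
The plan is to reduce the convexity of $\Phi$ to the minimal extension property (Theorem \ref{thm:mep}) by complexifying the real variable $t$ and feeding the optimal $L^2$-extension theorem on the tube domain (Theorem \ref{thm:optimaltube}) into it. First I would introduce $\tau = t + \ai s \in \C$ and set $\widehat{\varphi}(\tau, z) := \varphi(\Rea \tau, \Rea z)$ on $\C_\tau \times (V + \ai \R^n)$. Since $\varphi$ is convex on the open set $\R_t \times V_x$, the function $\widehat{\varphi}$ is plurisubharmonic and independent of $\Ima z$ (this is exactly the observation recalled in the introduction), so the hypotheses of Theorem \ref{thm:optimaltube} are met with $D = \C_\tau$. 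Correspondingly I would define $\widehat{\Phi}(\tau) := \Phi(\Rea \tau)$, which satisfies $e^{-\widehat{\Phi}(\tau)} = \int_V e^{-\widehat{\varphi}(\tau, x)} \dl(x)$; the assumption that this integral is finite, together with positivity of $e^{-\varphi}$ and boundedness of $V$, guarantees that $\widehat{\Phi}$ is real-valued. The goal is then to show $\widehat{\Phi}$ satisfies the minimal extension property, conclude it is subharmonic, and read off convexity of $\Phi$ from its independence of $\Ima \tau$.

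For the key step I would fix $a \in \C$ and $r > 0$; since $D = \C$, the condition $\Delta(a; r) \subset D$ holds automatically, and $\int_V e^{-\widehat{\varphi}(a,x)} \dl(x) = e^{-\widehat{\Phi}(a)} < +\infty$. Theorem \ref{thm:optimaltube} then yields a holomorphic $f$ on $\Delta(a;r)$ with $f(a) = 1$ and
$$
\int_{\Delta(a;r) \times V} |f(\tau)|^2 e^{-\widehat{\varphi}(\tau, x)} \dl(\tau, x) \leq \pi r^2 \int_V e^{-\widehat{\varphi}(a, x)} \dl(x).
$$
Because $f$ depends only on $\tau$ while $\widehat{\varphi}(\tau, x)$ depends only on $(\Rea \tau, x)$, Tonelli's theorem factors the left-hand integral as $\int_{\Delta(a;r)} |f(\tau)|^2 e^{-\widehat{\Phi}(\tau)} \dl(\tau)$, and the right-hand side equals $\pi r^2 e^{-\widehat{\Phi}(a)}$. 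This is precisely the minimal extension property for $\widehat{\Phi}$ at the point $a$ with radius $r$.

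Finally I would invoke Theorem \ref{thm:mep} to conclude that $\widehat{\Phi}$ is plurisubharmonic --- that is, subharmonic --- on $\C_\tau$, and then, since $\widehat{\Phi}$ is independent of $\Ima \tau$, the converse half of the convex/plurisubharmonic correspondence recalled in the introduction gives that $\Phi$ is convex on $\R$. The one genuine technical point, and the step I expect to need the most care, is verifying that $\widehat{\Phi}$ is upper semi-continuous, as required to apply Theorem \ref{thm:mep}. This I would obtain from lower semi-continuity of $t \mapsto e^{-\Phi(t)} = \int_V e^{-\varphi(t,x)} \dl(x)$ via Fatou's lemma, using that the convex function $\varphi$ is continuous --- in particular upper semi-continuous in $t$ --- on the open set $\R_t \times V_x$. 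Everything else is a direct substitution into the machinery already established.
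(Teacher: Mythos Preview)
Your proposal is correct and follows essentially the same approach as the paper: complexify, verify upper semi-continuity of $\widehat{\Phi}$ via Fatou's lemma, apply Theorem~\ref{thm:optimaltube} to obtain the minimal extension property for $\widehat{\Phi}$, and conclude via Theorem~\ref{thm:mep}. The only differences are expository---you spell out the Tonelli step and the upper semi-continuity argument a bit more explicitly than the paper does.
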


\begin{proof}
	We consider the following tube domains $\R_t+\ai \R_s$ and $V_x+\ai \R^n_y$, and set $\tau=t+\ai s, z=x+\ai y$. 
	We also let $\widehat{\Phi}(\tau):=\Phi(t)$ be a function on $(\R+\ai \R)_\tau$ and $\widehat{\varphi}(\tau,z):=\varphi(t,x)$ be a function on $(\R+\ai \R)_\tau \times (V+\ai \R^n)_z$. 
	Then it clearly holds that $\widehat{\varphi}$ is a plurisubharmonic function and    
	$$
	e^{-\widehat{\Phi}(\tau)}=\int_V e^{-\widehat{\varphi}(\tau,z)}\dl(x). 
	$$
	It is enough to show that $\widehat{\Phi}$ is plurisubharmonic. 
	Note that $\widehat{\Phi}$ is independent of $s$ and $\widehat{\varphi}$ is independent of $s$ and $y$. 
	We only need to show that $\widehat{\Phi}$ satisfies the minimal extension property since $\widehat{\Phi}$ is upper semi-continuous thanks to Fatou's lemma (cf. Theorem \ref{thm:mep}). 
	
	Take a point $a\in \R+\ai \R$ and $r>0$. 
	Then, by Theorem \ref{thm:optimaltube}, there exists a holomorphic function $f$ on $\Delta(a;r)$ satisfying $f(a)=1$ and 
	$$
	\int_{\Delta(a;r)\times V} |f(\tau)|^2 e^{-\widehat{\varphi}(\tau,x)}\dl(\tau,x)\leq \pi r^2 \int_V e^{-\widehat{\varphi}(a,x)}\dl(x)<+\infty,
	$$
	that is, 
	$$
	\frac{1}{\pi r^2}\int_{\Delta(a;r)}|f(\tau)|^2e^{-\widehat{\Phi}(\tau)}\dl(\tau)\leq e^{-\widehat{\Phi}(a)},
	$$
	which completes the proof. 
	%Since $\widehat{\Phi}$ satisfies the minimal extension property, we can conclude that $\widehat{\Phi}$ is plurisubharmonic (cf. Theorem \ref{thm:mep})
	
\end{proof}

%\begin{remark}
	%In the case that the dimension of the base domain is greater than $1$, we can also prove Pr\'ekopa's theorem in the same way since Theorem \ref{thm:mep} still holds in this case. 
%\end{remark}

\begin{remark}
	The above type proof can be applied to the complex Pr\'ekopa theorem as well. 
\end{remark}

If $V$ is an unbounded convex domain such as $\R^n$, we need to take a convex exhaustion. 
We only show the proof in the case that $V=\R^n$ without loss of generality.

\begin{theorem}
	Keep the notation above. Set $V=\R^n$. 
	Suppose that 
	$$
	e^{-{\Phi}(t)}:=\int_{\R^n}e^{-\varphi(t,x)}\dl(x)<+\infty. 
	$$
	Then $\Phi$ is convex. 
\end{theorem}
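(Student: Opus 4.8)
The plan is to reduce the unbounded case $V=\R^n$ to the already-established bounded case by exhausting $\R^n$ with an increasing sequence of bounded convex domains and showing that the corresponding sequence of convex functions $\Phi_k$ converges pointwise to $\Phi$. Concretely, I would set $V_k:=\{x\in\R^n\mid |x|<k\}$ (the open balls of radius $k$), which are bounded convex domains with $V_k\subset V_{k+1}$ and $\bigcup_k V_k=\R^n$. For each $k$ define
\begin{equation*}
e^{-\Phi_k(t)}:=\int_{V_k}e^{-\varphi(t,x)}\dl(x).
\end{equation*}
Since $\varphi$ is convex on $\R_t\times\R^n_x$, its restriction to $\R_t\times (V_k)_x$ is again convex, and the integral is finite because $\int_{V_k}e^{-\varphi(t,x)}\dl(x)\leq\int_{\R^n}e^{-\varphi(t,x)}\dl(x)=e^{-\Phi(t)}<+\infty$ for each $t$. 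Hence the previous theorem applies and each $\Phi_k$ is convex on $\R_t$.

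Next I would pass to the limit. As $k\to\infty$, the integrals $\int_{V_k}e^{-\varphi(t,x)}\dl(x)$ increase monotonically to $\int_{\R^n}e^{-\varphi(t,x)}\dl(x)$ by the monotone convergence theorem, since $e^{-\varphi(t,x)}\geq 0$ and $\mathbf{1}_{V_k}\nearrow\mathbf{1}_{\R^n}$ pointwise. Therefore $e^{-\Phi_k(t)}\nearrow e^{-\Phi(t)}$ for every fixed $t$, which gives $\Phi_k(t)\searrow\Phi(t)$ pointwise on $\R_t$. So $\Phi$ is the pointwise limit of a decreasing sequence of convex functions.

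Finally I would invoke the standard fact that a pointwise limit of convex functions is convex: if each $\Phi_k$ satisfies, for $t_0,t_1\in\R$ and $\theta\in[0,1]$,
\begin{equation*}
\Phi_k(\theta t_0+(1-\theta)t_1)\leq\theta\,\Phi_k(t_0)+(1-\theta)\,\Phi_k(t_1),
\end{equation*}
then passing to the limit $k\to\infty$ preserves the inequality, so $\Phi$ is convex. This step needs only the pointwise convergence just established and does not require any uniformity. I expect the main obstacle to be a minor technical one rather than conceptual: one must ensure that $\Phi_k(t)$ and $\Phi(t)$ are genuinely finite (equivalently that the integrals are strictly positive and finite) so that taking logarithms is legitimate — positivity of $\int_{V_k}e^{-\varphi}$ holds once $V_k$ has nonempty intersection with the effective domain of $\varphi$, which is automatic for all large $k$, and one may simply discard the finitely many initial indices where the integral might vanish. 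With that caveat handled, the argument is a clean exhaustion-plus-limit passage, and no new analytic input beyond monotone convergence and the bounded case is required.
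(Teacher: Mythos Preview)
Your proposal is correct and follows essentially the same approach as the paper: exhaust $\R^n$ by the open balls $B_k=\{|x|<k\}$, apply the bounded-case theorem to obtain convex functions $\Phi_k$, observe that $\Phi_k\searrow\Phi$, and conclude that $\Phi$ is convex. Your write-up is in fact more detailed than the paper's three-line argument, spelling out monotone convergence and the stability of convexity under pointwise limits; the positivity caveat you raise is harmless here since $\varphi$ is real-valued, so $e^{-\varphi}>0$ and every $\int_{V_k}e^{-\varphi(t,x)}\dl(x)$ is strictly positive.
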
 

\begin{proof}
	Let $B_j:=\{|x|<j\}\subset \R^n$ for $j\in \N$. We define 
	$$
	e^{-\Phi_j(t)}:=\int_{B_j}e^{-\varphi(t,x)}\dl(x) <+\infty. 
	$$
	Then we know that $\Phi_j$ is convex. 
	It holds that $\Phi_j$ is decreasing to $\Phi$. 
	Then $\Phi$ is convex as well. 
\end{proof}

%%%%%%%%%%%%%%%%%%%%%%%%%%%%%%%%%%%%%%%%%%%%%%%%%%%%%%
%%%%%%%%%%%%%%%%%%%%%%%%%%%%%%%%%%%%%%%%%%%%%%%%%%%%%%

%%%%%%%%%%%% References %%%%%%%%%%%%%
%%
%<Author name> is written as Initial of Given Name, and Family Name.
%<Title> is written in roman letters.
%<Journal name> should be abbreviated according to
% the MR Serials Abbreviations List of Mathematical Reviews:
% (Abbreviations of Names of Serials; http://www.ams.org/mr-database)
%For <Pages>, use en-dash "--" between page numbers.
%%

\end{document}